\def \R{\mathbb{R}}
\def \C{\mathbb{C}}  
\def\co{\colon}
\numberwithin{equation}{section} 
\theoremstyle{plain} 
\newtheorem{thm}{Theorem}[section] 
\newtheorem{cor}[thm]{Corollary} 
\newtheorem{theorem}[thm]{Theorem}
\theoremstyle{definition}
\newtheorem{example}[thm]{Example} 
\newtheorem{conjecture}[thm]{Conjecture}
\newtheorem{remark}[thm]{Remark}
\def\C{{\mathbb C}}
\def\R{{\mathbb R}}
\def\1{\hbox{\rm\rlap {1}\hskip.03in{\rom I}}} 
\def\Bbbone{{\rm1\mathchoice{\kern-0.25em}{\kern-0.25em} 
{\kern-0.2em}{\kern-0.2em}I}} 
\begin{document}
\title[Conjectures about virtual Legendrian knots and links] 
{Conjectures about virtual Legendrian knots and links} 
\author[V.~Chernov, R.~Sadykov]{Vladimir Chernov, Rustam Sadykov}
\address{V.~Chernov, Mathematics Department, 6188 Kemeny,  Dartmouth College, Hanover NH 03755, 
USA} 
\email{Vladimir.Chernov@dartmouth.edu} 

\address{R.~Sadykov, Mathematics Department
138 Cardwell Hall
1228 N. Martin Luther King Jr. Dr.
Manhattan, KS 66506--2602, 
USA} 
\email{Sadykov@math.ksu.edu}

\subjclass{Primary 53C24; Secondary 53C50, 57D15, 83C75}

\begin{abstract} We formulate conjectures generalizing some known results to the category of virtual Legendrian knots. This includes statements relating virtual Legendrian knots to ordinary Legendrian knots, non-existence of positive virtual Legendrian self isotopies for the class of the fiber of $ST^*M$ and the conjectural relation of virtual Legendrian isotopies to causality in generalized spacetimes. We prove the conjectures in the case of $2$-dimensional $M$ and $(2+1)$-dimensional spacetimes. We also formulate and prove the version of the Arnold's $4$ cusp conjecture for virtual isotopies.
\end{abstract}

\keywords{virtual knot, Legendrian knot, causality, spacetime, linking, Low conjecture, positive Legendrian isotopy, non-negative Legendrian isotopy}

\maketitle

\section{Main Results and Definitions}
We work in the smooth category, all maps and manifolds are assumed to be smooth and the word ``smooth'' means $C^{\infty}.$ All manifolds, links and knots are assumed to be oriented and connected (unless we talk about links).

A {\it contact manifold} $N^{2k+1}$ is an odd dimensional manifold equipped with a hyper plane distribution $\xi$ that can locally be given as the kernel of a $1$-form $\alpha$ with $\alpha\wedge d\alpha^k$ nowhere zero. A $k$-dimensional manifold $L^k$ is {\it Legendrian} if it is everywhere tangent to the hyperplanes of the contact structure and it is {\it transverse\/} if it is nowhere tangent to the  contact hyperplanes. 

The standard examples of contact manifolds are the spherical cotangent bundle $ST^*M$ and the first jet space $J^1M$ which have natural contact structures.
Note that a surgery on $M$ resulting in a manifold $M'$ induces a modification of contact manifolds $ST^*M\mapsto ST^*M'$.

{\it A virtual Legendrian isotopy\/} in $ST^*M$ is a sequence of Legendrian isotopies, modifications of Legendrian submanifolds in $ST^*M$ induced by surgery on $M$ away from the front projection of the Legendrian submanifold, and  contactomorphisms of $ST^*M$ induced by diffeomorphisms of the manifold $M$.
Virtual Legendrian knots were introduced in the work of Cahn and Levi~\cite{CahnLevi}. 

The virtual Legendrian isotopy is {\em subcritical} if the index of the attached handles is less than half of the dimension of $M$  and it is {\em allowable} if the index of the attached handles does not exceed half of the dimension of $M.$ In particular, the virtual Legendrian isotopies of~\cite{CahnLevi}  that do not involve deleting handles are allowable. {\em All the conjectures we formulate are for the allowable virtual Legendrian isotopies.\/}

Note that a surgery on $M$ induces a modification on $J^1M$ and virtual Legendrian isotopies in $J^1M$ are defined similarly to the virtual Legendrian isotopies on $ST^*M$. In a similar fashion one can define virtual transverse isotopies both for $ST^*M$ and $J^1M.$
We note that in contrast to virtual isotopies in $ST^*M$, in the case of virtual Legendrian isotopies in $J^1M$ the projection of a virtual Legendrian knot to $M$ is of codimension $0$. In both cases, the surgery of virtual Legendrian knots is performed on the complement in $M$ to the projection of the virtual Legendrian knot. 

Classical virtual knots were introduced by Kauffman~\cite{Kauffman} and they are ordinary knots in a thickened surface $F\times \R$ considered up to a sequence of isotopies and modifications of $F\times \R$ induced by surgery on $F$ away from a projection of a knot (or a link). A Gauss diagram of a knot is an oriented chord diagram on a circle. The chords connect the preimages of double points of a diagram projection and are oriented from the preimage of an upper strand to the preimage of a lower strand. They are equipped with signs $\pm 1$ which is the writhe of the corresponding diagram crossing. Polyak and Viro
~\cite{PolyakViro} noted that a Gauss diagram of a knot uniquely defines the knot on $S^2\times \R.$ 

Gauss diagrams can be considered up to the formal diagramatic version of the usual knot Redemeister moves however using these moves it is easy to get a Gauss diagram which does not correspond to a planar knot diagram and hence is virtual. Carter, Kamada, Saito~\cite{CarterKamadaSaito} proved that formal Gauss diagrams considered up to these formal Redemeister moves give the same equivalence classes as the ones of virtual knots. 

As it is well known the cooriented front projection of a Legendrian knot in $ST^*M$ to $M$ completely determines the Legendrian knot. Such front projections on a surface can be described by diagrams similar to Gauss diagrams with the symbols on chords denoting the crossing and cusp types. Cahn and Levi~\cite{CahnLevi} proved that equivalence classes of such generalized ``Gauss diagrams'' for front projections to a surface coincide with the equivalence classes of virtual Legendrian knots in $ST^*M.$

A natural question is if a similar result holds for virtual Legendrian knots and links in $J^1M$ or for $ST^*M$ for higher dimensional manifold $M$ closed or non-closed.

\section{Virtual Positive Legendrian Isotopy}

Recall that a $Y^x_{\ell}$ manifold is a Riemannian manifold $(M,g)$ such that there is a point $x\in M$ and a positive number $\ell>0$ such that all unit speed geodesics starting from $x$ return back to $x$ in time $\ell.$ For such manifolds the cogeodesic flow in $ST^*M$ induces a positive Legendrian isotopy of the sphere fiber over $x$ to itself. Recall that a Legendrian isotopy is {\it positive} if the evaluation of the contact form on all the velocity vectors of all the points and at all times of the deformed Legendrian submanifold is positive. {\em Non-negative\/} Legendrian isotopies are defined similarly.

The classical result of Berard-Bergery~\cite{BerardBergery},~\cite{Besse} says that if $M$ admits a Riemannian metric making it into a $Y^x_{\ell}$ manifold then the universal cover of $M$ is compact (and hence $M$ has finite $\pi_1(M))$ and the rational cohomology algebra of its universal cover is generated by one element. In the works of the first author and Nemirovski we proved~\cite{CN1, CN2} that if $M$ is such that there is a non-constant non-negative Legendrian isotopy of a fiber of $ST^*M$ over a point of $M$ back to itself then the universal cover of $M$ is compact. So in this work we conjectured that if such a non-negative Legendrian isotopy exists, then the rational cohomology algebra of the universal cover is generated by one element. 

A slightly stronger version of this conjecture for positive Legendrian isotopies was proved by Frauenfelder, Labrouse and Schlenk~\cite{FLS} who formally proved more than we conjectured and showed that the integer cohomology ring of the universal cover is the one of a CROSS (Compact Rank One Symmetric Space), i.e. a sphere or one of the projective spaces over $\mathbb C, \mathbb H, \mathbb Ca.$
In particular this means that the conclusions of the Berard-Bergery theorem and of the Bott-Samelson theorem~\cite{BottSamelson},~\cite{Besse} are the same. The result of~\cite{FLS} can be generalized to non-negative Legendrian isotopies by using the result of Nemirovski and the first author~\cite{CN3} saying that if there is a non-negative Legendrian isotopy then there is a positive Legendrian isotopy as well, even though this positive Legendrian isotopy is obtained not by deforming the non-negative Legendrian isotopy but rather by deforming a certain power of it. (See also Colin, Ferrand and Pushkar~\cite{CFP} and~\cite{CN1} for the case of positive isotopies of a fiber to itself in the case where $M$ is coverable by an open domain in $\R^m.$)

Currently the only simply connected spaces known to us  that satisfy these algebraic topology conditions but are not diffeomorphic to a CROSS are exotic spheres, see Besse~\cite{Besse}. So it could be that this algebraic topology condition means that the simply connected manifold is homeomorphic to a CROSS.

We define {\it strong virtual Legendrian isotopies} to be the virtual Legendrian isotopies during which the universal cover of the underlying manifold $M$ is never homeomorphic to a CROSS.

Now we formulate our first conjecture
\begin{conjecture}\label{c:2.1}  There exists no strong allowable positive (or non-constant non-negative) virtual Legendrian isotopy of the sphere fiber of $ST^*M$ to itself.
\end{conjecture}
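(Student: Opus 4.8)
The plan is to prove Conjecture~\ref{c:2.1} in the case $\dim M=2$, which is the case asserted above; in higher dimensions the statement seems to require higher dimensional analogues of the theorems of Chernov--Nemirovski~\cite{CN1,CN2} and Frauenfelder--Labrousse--Schlenk~\cite{FLS} that are not presently available. We would reduce a hypothetical strong allowable positive (or non-constant non-negative) virtual Legendrian self-isotopy of a fiber to an ordinary such isotopy in the spherical cotangent bundle of a single surface, and then apply~\cite{CN1,CN2}.

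We first analyse which surfaces can occur. Since all manifolds are oriented, a closed surface is either $S^2$, in which case $\wt M\cong S^2$ is a CROSS, or has genus $g\ge 1$, in which case $\wt M\cong\R^2$ is noncompact; a noncompact surface never has a CROSS as its universal cover. When $\dim M=2$ the allowable surgeries are exactly those of index $0$ and $1$, that is, attaching a disjoint $2$-sphere or a $1$-handle, and none of these decreases the genus of the component carrying the front projection. Since the virtual isotopy begins and ends with the front on the strong surface $M$, of some genus $g\ge 1$, the genus of the front-carrying component is constantly $g$, while the auxiliary $2$-sphere components, and the surgery moves among them, affect neither the contact manifold of the front-carrying component nor the Legendrian.

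We would then pass to universal covers. Each front-carrying surface stage $C_i$ has $\wt{C_i}\cong\R^2$; after a coherent choice of lifts, the Legendrian-isotopy stages lift to positive (resp.\ non-constant non-negative) Legendrian isotopies in $ST^*\R^2$, the diffeomorphism-induced contactomorphism stages lift to contactomorphisms of $ST^*\R^2$ coming from diffeomorphisms of $\R^2$, and the surgery stages drop out. One thus obtains a path from the fiber $ST^*_{\tilde x}\R^2$ to some fiber $ST^*_{g\tilde x}\R^2$, $g\in\pi_1(M)$, built from positive Legendrian isotopies interspersed with diffeomorphism-induced jumps and containing at least one genuinely positive stage. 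This should be impossible: the front projection of a fiber of $ST^*\R^2$ is a single point, which a diffeomorphism of $\R^2$ carries to another single point, whereas a genuinely positive Legendrian isotopy starting at a fiber moves the front to a subset of positive dimension and, since $\R^2$ has no conjugate points, never returns it to a point; hence the path cannot close up on a fiber. Equivalently, this is the assertion of~\cite{CN1,CN2} that $ST^*M$ with $\wt M$ noncompact admits no non-constant non-negative Legendrian self-isotopy of a fiber, the proof of which must be extended to accommodate the diffeomorphism jumps; in the non-negative case one may instead first invoke~\cite{CN3} to pass to a positive isotopy.

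The main obstacle will be making this passage to the universal cover rigorous across the diffeomorphism-induced contactomorphism stages: choosing the lifts coherently, tracking which lift of $x$ the lifted path terminates on, and verifying that the absence of conjugate points in $\R^2$, equivalently the theorem of~\cite{CN1,CN2}, really does obstruct the closed-up lifted path. We must also justify the claimed genus monotonicity of the allowable surgeries on a surface --- where the hypothesis that the index not exceed $\dim M/2$ is exactly what excludes the genus-decreasing moves --- and dispose of the noncompact-surface case, to which the same circle of ideas applies directly.
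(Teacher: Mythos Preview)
There is a genuine gap. Your argument hinges on the claim that ``the genus of the front-carrying component is constantly $g$,'' deduced from the fact that index-$0$ and index-$1$ handle attachments do not decrease genus together with the assumption that the isotopy begins and ends on the \emph{same} surface $M$. But an allowable virtual Legendrian isotopy on a surface consists only of stabilizations (attaching $0$- and $1$-handles), Legendrian isotopies, and diffeomorphism-induced contactomorphisms; there is no detaching. Hence after $n$ stabilizations the terminal surface is $M_n$ of genus $g+n$, and ``to itself'' means the fiber $K_0$ now regarded in $ST^*M_n$, not a return to $ST^*M$. The paper's proof explicitly works in this setting (``to another fiber'' in $ST^*M_n$). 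Your genus-constancy conclusion is therefore unfounded, and with it the claim that ``the surgery stages drop out'' when passing to universal covers: the surfaces $M=M_0, M_1,\dots,M_n$ along the way are genuinely different, each with its own universal cover $\cong\R^2$, and there is no single $\R^2$ to which the whole isotopy lifts.

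The paper's approach supplies exactly the missing mechanism for ``undoing'' a stabilization. One argues by induction on the minimal number $n$ of stabilizations. For the last attached handle $H\subset M_n$ with meridian $C$, there is an infinite cyclic cover $M_n\setminus C\to M_n$, and $M_n\setminus C$ is identified (via a diffeomorphism that is the identity away from $H$) with $M_{n-1}\setminus 2D$. Since $K_n$ is Legendrian isotopic to a fiber, the non-negative Legendrian isotopy of $K_n$ to a fiber in $ST^*M_n$ lifts to a non-negative Legendrian isotopy of $K_{n-1}$ to a fiber in $ST^*M_{n-1}$, contradicting minimality of $n$. This local cyclic cover, rather than the global universal cover, is the key device; it handles one handle at a time and avoids entirely the problem you flag of coherently lifting across changing surfaces and diffeomorphism jumps.
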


\begin{remark}
A similar statement is false if we allow all positive virtual Legendrian isotopies rather than only the allowable ones. To see this consider a positive Legendrian isotopy of the fiber over a point of a surface $M^2$ to the knot whose front projection is a small outwards cooriented circle $K.$ Let $K_-$ and $K_+$ be circular fronts with the same center as $K$ cooriented outwards with radii respectively a bit smaller and a bit larger than $K.$  Now perform an index one surgery such that one of the feet of the glued handle is the disk bounded by $K_-.$ Next perform the index two surgery along $K_+.$ As a result we get a small inwards cooriented front which clearly admits a positive Legendrian isotopy to the sphere fiber. Thus positive virtual Legendrian isotopies of a fiber of $ST^*M$ to itself do exist if we do not require the virtual Legendrian isotopy to be allowable. It could however be that to avoid this phenomenon it is enough to prohibit only some of the surgeries involving handles of index more than half of the dimension of $M^m$ (for example index $m$ handles) rather than all of them.
\end{remark}

\begin{theorem}\label{theorempositiveisotopy}
The statement of Conjecture~\ref{c:2.1} is true in the case where $M$ is a surface. 
\end{theorem}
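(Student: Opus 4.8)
The plan is to show that for a surface $M$ an allowable virtual Legendrian isotopy beginning and ending at a fiber of $ST^*M$ cannot make essential use of its surgeries, so that, once these are discarded, it is an honest positive (resp.\ non-constant non-negative) Legendrian isotopy between fibers of a fixed $ST^*M$; the theorem then follows from the rigidity result of Chernov and Nemirovski \cite{CN1, CN2}.

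To that end I would first classify the allowable surgeries. Since $\dim M = 2$, allowability admits only handles of index $0$ and $1$. On an oriented surface an index-$0$ surgery adjoins a disjoint $2$-sphere, while an index-$1$ surgery (a surgery along an embedded $S^0$) either raises the genus of one component by one, when the two feet lie in the same component, or merges two components with genera adding, when they lie in different components; the index-$2$ surgeries, which alone can lower a genus, are forbidden --- and these are exactly the moves used in the counterexample of the Remark. Hence the total genus, the sum of the genera of the components, is non-decreasing along the isotopy and strictly increases only at a same-component index-$1$ surgery. Since the underlying manifold is $M$ at both ends, this total genus (being non-decreasing with equal values at the two ends) is constant, so no genus-raising surgery ever occurs; every surgery therefore adjoins an $S^2$ or merges two components. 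As $M$ is connected, an induction then shows that at every moment the underlying manifold is a component diffeomorphic to $M$ --- the one carrying the knot, since surgeries are supported away from the one-point front projection of a fiber and honest isotopies are continuous --- together with finitely many $2$-spheres, and the canonical diffeomorphisms $M \# S^2 \cong M$ and $S^2 \# S^2 \cong S^2$ let us keep the first component identified with $M$ throughout. In particular, since $S^2$ is the only oriented surface whose universal cover is homeomorphic to a CROSS, the ``strong'' hypothesis already fails at time $0$ unless $M \not\cong S^2$, which we henceforth assume; then $\pi_1(M)$ is infinite and the universal cover of $M$ is non-compact.

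Using these identifications I would straighten the virtual isotopy into an honest one. Each surgery modification is supported away from the front, so after identifying $M \# S^2$ with $M$ it does not move the Legendrian knot at all. Each diffeomorphism-induced contactomorphism $\Phi_i$ of $ST^*M$ occurring in the isotopy preserves the canonical contact form, hence positivity (resp.\ non-negativity) of velocity vectors; such contactomorphisms are absorbed by the standard device of replacing the path $\Lambda(t)$ by $\Phi(t)^{-1}\Lambda(t)$, where $\Phi(t)$ is the composition of the $\Phi_i$ that have occurred by time $t$ --- a continuous path, because consecutive jumps cancel. This produces an honest positive (resp.\ non-negative) Legendrian isotopy in $ST^*M$ from the fiber over the initial point $x_0$ to the fiber over some point $y$; in the non-negative case it is non-constant, for otherwise the virtual isotopy would consist only of handle attachments and contactomorphisms with no genuine motion of the Legendrian knot, hence be constant, contrary to hypothesis. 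Conjugating this isotopy by a diffeomorphism of $M$ exchanging $x_0$ and $y$ and concatenating yields a positive (resp.\ non-constant non-negative) Legendrian isotopy of the fiber over $x_0$ to itself, whence by \cite{CN1, CN2} the universal cover of $M$ is compact, contradicting $M \not\cong S^2$. This contradiction proves the theorem.

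The crux is the surgery bookkeeping: it is precisely there that the allowability hypothesis is used, pinning the underlying surface to a fixed diffeomorphism type so that ``virtual'' collapses to ``classical.'' Everything afterward is close to a formal consequence of the known non-existence of positive (or non-constant non-negative) Legendrian isotopies of a fiber of $ST^*M$ back to itself; the remaining care is only to check that the passage to an honest isotopy preserves positivity, to re-identify the endpoint with the starting fiber via the conjugation trick, and to rule out the degenerate ``all contactomorphisms'' case hidden in the word ``non-constant.''
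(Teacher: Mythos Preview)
Your argument hinges on the claim that ``the underlying manifold is $M$ at both ends,'' from which you conclude that the total genus is constant and hence that no genus-raising index-$1$ surgery ever occurs. But this premise is not available. In the paper's setting an allowable virtual Legendrian isotopy of the fiber ``to itself'' means an isotopy ending at \emph{a} sphere fiber of $ST^*M_n$, where $M_n$ is the surface obtained after the surgeries; indeed the first sentence of the paper's proof reads ``\ldots admits an allowable non-negative virtual Legendrian isotopy to \emph{another} fiber,'' and the entire argument is organised around a minimal number $n>0$ of stabilizations. Since allowable surgeries on a surface attach only handles of index~$\le 1$ and never detach any, there is no mechanism forcing a return to $M$; your genus bookkeeping therefore does not exclude the essential case $n>0$, and the remainder of your reduction to an honest isotopy in $ST^*M$ collapses.

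What the paper does instead is a covering argument that \emph{lifts} the non-negative isotopy through each stabilization: with $n$ minimal, the non-negative Legendrian isotopy of $K_n$ to a fiber in $ST^*M_n$ is pulled back along the infinite cyclic cover $M_n\setminus C\to M_n$ (with $C$ the meridian of the last handle), and $ST^*(M_n\setminus C)$ is identified with $ST^*(M_{n-1}\setminus 2D)\subset ST^*M_{n-1}$, yielding a non-negative Legendrian isotopy of $K_{n-1}$ to a fiber in $ST^*M_{n-1}$ and contradicting minimality. This lifting, not a genus count, is the substantive step; it is precisely what is invoked again in the proof of Corollary~\ref{corollarypositive}, where the virtual isotopy between the two link components has no reason to begin and end in the same surface, so your shortcut would not suffice even if the literal statement of the theorem admitted the narrow reading you use.
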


\begin{proof}
Suppose that $M$ is a closed surface. Assume to the contrary that a spherical fiber $K_0$ of $ST^*M$ admits an allowable non-negative virtual Legendrian isotopy to another fiber. We may assume that the surface $M$ is connected, and consider only surgeries of index $1$ which we call stabilizations. Then an allowable non-negative virtual Legendrian isotopy is a composition of finitely many stabilizations of the surface and positive Legendrian isotopies. 

Let $M_k$ denote the surface obtained from $M$ by $k$ stabilizations. 
Suppose that the number of stabilizations is $n$, and $n$ is minimal in the sense that there is no allowable non-negative virtual Legendrian isotopy with less than $n$ stabilizations of $K_0$ to another fiber. 
Then a part of the allowable non-negative Legendrian isotopy takes $K_0$ to a Legendrian knot $K_{n-1}$ in $ST^*M_{n-1}$ that satisfies two properties: 
\begin{itemize} 
\item the Legendrian knot $K_{n-1}$ does not admit a non-negative Legendrian isotopy to a fiber in $ST^*M_{n-1}$, and 
\item there is a non-negative Legendrian isotopy of the stabilization $K_n\subset ST^*M_n$ of the knot $K_{n-1}$ to a fiber in $ST^*M_n$. 
\end{itemize}

The surface $M_{n}$ is obtained from the surface $M_{n-1}$ by removing two discs $2D$ in the complement to $p(K_{n-1})$, where $p$ is the projection of $ST^*M_{n-1}$ to $M_{n-1}$, and attaching along the created boundary a handle $H=[-1,1]\times S^1$. Let $C$ denote the meridian $\{0\}\times S^1$ on the handle $H$. There is an infinite cover $\pi\co M_n\setminus C\to M_n$ that restricts to the identity map on $M_n\setminus H$. 
Since the knot $K_n$ is isotopic to a fiber in $ST^*M_n$, it admits a lift $\pi^*K_n$ to a Legendrian knot in $\pi^*ST^*M_n$. 
Furthermore, the non-negative Legendrian isotopy of $K_n$ to a fiber in $ST^*M_n$ lifts to a non-negative Legendrian isotopy of $\pi^*K_n$ to a fiber in $\pi^*ST^*M_n$. 

On the other hand, the space $\pi^*(ST^*M_n)$ can be identified with the spherical cotangent bundle of $M_n\setminus C$, which in its turn is contactomorphic to the spherical cotangent bundle of $M_n\setminus H \cong M_{n-1}\setminus 2D$. Furthermore, we may choose a contactomorphism $\psi\co \pi^*(ST^*M_n)\cong ST^*(M_{n-1}\setminus 2D)$ so that it covers a diffeomorphism $M_n\setminus C\to M_{n-1}\setminus 2D$ that restricts to the map that identifies the complement to a small neighborhood of $H$ in $M_{n}$ with the complement to a small neighborhood of $2D$ in $M_{n-1}$. In particular, the contactomorphism $\psi$ takes the Legendrian knot $K_n$ to the Legendrian knot $K_{n-1}$. 
Then the non-negative Legendrian isotopy of $K_n$ to a fiber in $ST^*M_n$ lifts to a non-negative Legendrian  isotopy of $K_{n-1}$ to a fiber in $ST^*M_{n-1}$.  

This contradicts the minimality of $n$. Therefore the statement of Conjecture~\ref{c:2.1} is true for closed surfaces. 

Suppose that the surface $M$ is noncompact of finite genus.  Then it is an open submanifold of a closed surface $M'$ of finite genus. As above, we obtain surfaces $M'_{n-1}$ and $M'_{n}$ where $M'_{n}$ is obtained from $M'_{n-1}$ by a surgery of index $1$. Again, let $C$ be a meridian of the attached handle.  Then there is a cover $\pi'\co M_n'\setminus C\to M_n'$. 
Similarly, we obtain open submanifolds $M_{n-1}\subset M'_{n-1}$ and $M_n\subset M'_n$. We may assume that $C\subset M_n$. Then there is a cover $\pi\co M''\to M_n$ obtained by restricting the map $\pi'$ to $(\pi')^{-1}M_n$. On the other hand, the surface $M''$ can be identified with a subsurface of $M\setminus 2D$, where $2D$ is a disjoint union of two discs along which the $1$-handle is attached. Therefore, the non-negative Legendrian isotopy of a knot in $ST^*M_n$ lifts to a non-negative Legendrian isotopy of the corresponding knot in $ST^*M_{n-1}$. 

Suppose now that $M$ is of infinite genus. Then the trace of a non-negative virtual Legendrian isotopy is contained in a compact part of all the corresponding surfaces. Thus this case can be reduced to the case where $M$ is of finite genus. 
\end{proof}

\begin{cor}\label{corollarypositive}
Let $M$ be a surface and let $L$ be a two component Legendrian link in $ST^*M$ that admits a strong allowable virtual Legendrian isotopy to a pair of sphere fibers. Then there is {\bf no} non-negative strong allowable virtual Legendrian isotopy of one component of $L$ to the other.
\end{cor}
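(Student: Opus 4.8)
The plan is to contradict Theorem~\ref{theorempositiveisotopy} by the same ``minimal number of stabilizations'' induction used in its proof, the only new ingredient being the passage from the link $L$ to the two fibers. Suppose, to the contrary, that $L_1$ admits a non-negative strong allowable virtual Legendrian isotopy $\gamma$ to $L_2$, and let $\Psi$ be the given strong allowable virtual Legendrian isotopy of $L=L_1\sqcup L_2$ to a pair of sphere fibers $F_1\sqcup F_2$. Two preliminary remarks. First, forgetting the component $L_2$ makes $\Psi$ into a strong allowable virtual Legendrian isotopy of $L_1$ to the single fiber $F_1$: the Legendrian isotopies of the link restrict to Legendrian isotopies of $L_1$, the surgeries are away from the front of $L_1$ since they are away from the front of the whole link, the handle indices and the underlying surfaces do not change; symmetrically $L_2$ is virtually isotopic to $F_2$. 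Hence every Legendrian knot occurring along $\Psi$ or along $\gamma$ is virtually isotopic to a sphere fiber, and so will lift to the infinite cyclic covers used below. Second, since $M$ is a surface it suffices to treat surgeries of index $1$ (index-$0$ surgeries merely add $2$-spheres disjoint from everything).

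Among all counterexamples --- a surface, a two-component Legendrian link on it satisfying the hypotheses, a witnessing $\Psi$, and a non-negative $\gamma$ --- choose one minimizing the total number of index-$1$ surgeries appearing in $\Psi$ and in $\gamma$. If this number is $0$, then $\Psi$ is an ordinary Legendrian isotopy of the link $L$ onto $F_1\sqcup F_2$ in a fixed $ST^*M$ (possibly composed with diffeomorphisms of $M$, which are contactomorphisms of $ST^*M$), where $M$ is a surface with noncompact universal cover --- this is what ``strong'' means for surfaces --- and $\gamma$ is an ordinary non-negative Legendrian isotopy of $L_1$ to $L_2$ in $ST^*M$. By the Legendrian isotopy extension theorem, $\Psi$ gives rise to a contactomorphism $G$ of $ST^*M$ with $G(L_i)=F_i$. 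Applying $G$ to every knot of the family $\gamma$ produces a Legendrian isotopy from $F_1$ to $F_2$, and, because $G^{*}\alpha$ is a positive multiple of the contact form $\alpha$, this isotopy is again non-negative. Since the two components of a link are disjoint, $F_1$ and $F_2$ are sphere fibers over distinct points, so this is a non-constant non-negative virtual Legendrian isotopy of a sphere fiber of $ST^*M$ to a sphere fiber, contradicting Theorem~\ref{theorempositiveisotopy}.

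If the total number of index-$1$ surgeries is positive, then $\Psi$ or $\gamma$ contains one; isolating the last such surgery, which produces a surface $M_n$ from a surface $M_{n-1}$ with meridian circle $C$ of the attached handle, I would undo it exactly as in the proof of Theorem~\ref{theorempositiveisotopy}. Pass to the infinite cyclic cover $\pi\colon M_n\setminus C\to M_n$; the sphere fibers lift, hence so do the relevant Legendrian knots (the components of $F_1\sqcup F_2$ when the surgery lies in $\Psi$, or --- using that its endpoints are virtually isotopic to fibers --- the knots of $\gamma$ after the surgery) together with the non-negative Legendrian isotopy joining them, and the contactomorphism $\pi^{*}(ST^*M_n)\cong ST^*(M_{n-1}\setminus 2D)$ identifies the post-surgery data with the corresponding pre-surgery data inside $ST^*M_{n-1}$. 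This yields a counterexample with strictly fewer index-$1$ surgeries, a contradiction. The noncompact cases --- finite genus by enlarging to a closed surface, infinite genus by compactness of the trace --- are handled verbatim as in the proof of Theorem~\ref{theorempositiveisotopy}.

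The step I expect to be the real obstacle is this last destabilization: one must arrange the covering-space choices for $\Psi$ and for $\gamma$ compatibly, so that after deleting a surgery the lifted $\gamma$ still runs from the correct lift of $L_1$ to the correct lift of $L_2$ and the reduced data is a genuine counterexample --- in particular that a surgery of $\gamma$ that happens to be a destabilization (dual on a surface to an index-$1$ attachment) is undone with the cover taken in the right direction --- and one must check that ``strong'' descends to the destabilized surface, which for surfaces is automatic because ``strong'' only rules out $S^2$ and $\R P^2$, and removing disks or destabilizing preserves noncompactness of the universal cover. Granting this, the argument is a transcription of the proof of Theorem~\ref{theorempositiveisotopy} together with the elementary fact that a contactomorphism carries non-negative isotopies to non-negative isotopies.
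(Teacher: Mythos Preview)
Your argument is correct in spirit and reaches the same contradiction, but the paper's route is shorter and sidesteps precisely the obstacle you flag at the end. Rather than running a \emph{joint} induction on the total number of stabilizations in $\Psi$ and $\gamma$, the paper applies the covering/destabilization argument of Theorem~\ref{theorempositiveisotopy} to $\Psi$ and to $\gamma$ \emph{separately}. From $\Psi$ one extracts an ordinary Legendrian isotopy of $L$ to a pair of fibers in $ST^*M$, hence a contactomorphism $c_1$ of $ST^*M$ carrying $L$ to two fibers $F, F'$; from $\gamma$ one extracts an ordinary non-negative Legendrian isotopy $\varphi_t$ from $L_1$ to $L_2$ in $ST^*M$. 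Because the two reductions are independent, there is no need to coordinate lifts of $\Psi$ with lifts of $\gamma$, and your ``compatibility of covering choices'' issue simply does not arise.

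Two smaller points where the paper is tighter than your base case. First, after obtaining the non-negative isotopy $c_1\circ\varphi_t$ from the fiber $F$ to the fiber $F'$, the paper composes with a contactomorphism $c_2$ swapping $F$ and $F'$ (such a $c_2$ exists, induced by a diffeomorphism of $M$) to produce a non-constant non-negative Legendrian isotopy of a fiber \emph{to itself}; you invoke Theorem~\ref{theorempositiveisotopy} for ``fiber to a different fiber,'' which is what its \emph{proof} establishes but not what its \emph{statement} says. Second, having reduced everything to ordinary (non-virtual) isotopies in $ST^*M$, the paper appeals directly to the classical result of \cite{CN1} to conclude that $M$ must be a sphere, contradicting ``strong''; there is no need to feed the conclusion back through Theorem~\ref{theorempositiveisotopy}. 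Net effect: same ingredients, but the paper's separation of the two virtual isotopies makes the proof a clean two-paragraph deduction rather than an induction with a delicate compatibility step.
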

\begin{proof}
The proof is similar to the one of Theorem~\ref{theorempositiveisotopy}. Indeed, the argument in the proof of Theorem~\ref{theorempositiveisotopy} shows that the existence of an allowable virtual Legendrian isotopy of $L$ to a pair of sphere fibers implies the existence of a Legendrian isotopy of $L$ to a pair of fibers in $ST^*M$. Consequently, there is a contactomorphism $c_1$ of $ST^*M$ that takes $L$ to a link $L'$ that consists of two sphere fibers of $ST^*M$.

Similarly, the argument in the proof of Theorem~\ref{theorempositiveisotopy} shows that the existence of a non-negative allowable virtual Legendrian isotopy from one component of $L$ to the other implies the existence of a non-negative Legendrian isotopy $\varphi_t$ from one component of $L$ to the other in $ST^*M$. Then $c_1\circ \varphi_t$ is a non-negative Legendrian isotopy from one sphere fiber in $L'$ to the other, say a fiber $F$ to the fiber $F'$. 

There is a contactomorphism $c_2$ from $ST^*M$ to itself that takes the fiber $F$ to $F'$ and takes $F'$ to $F$. Then $c_2\circ c_1\circ \varphi_t$ is a non-negative Legendrian isotopy that takes the fiber $F'$ to $F$. 

Finally, the composition of non-negative Legendrian isotopies $c_1\circ \varphi_t$ and $c_2\circ c_1\circ \varphi_t$ is a non-constant non-negative Legendrian isotopy from a fiber of $ST^*M$ to itself.  Consequently,  $M$ is a sphere, see~\cite{CN1}. However Corollary~\ref{corollarypositive} is about {\bf strong} allowable virtual Legendrian isotopy so $M$ can not  be a sphere. Thus, there is no non-negative strong allowable virtual Legendrian isotopy of one component to the other.
\end{proof}

\begin{conjecture}\label{nononnegativeisotopy} We conjecture that for $M^m, m\geq 3$, whose universal cover is not homeomorphic to CROSS and  $L$ that is a two component Legendrian link in $ST^*M$ that admits an allowable virtual Legendrian isotopy to a pair of fibers, there is {\bf no} non-negative {\bf strong} allowable virtual Legendrian isotopy of one component of $L$ to the other.
\end{conjecture}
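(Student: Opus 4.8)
We sketch a strategy for Conjecture~\ref{nononnegativeisotopy}. The plan is to reduce it to Conjecture~\ref{c:2.1} for the manifold $M^m$ and then run the argument of Corollary~\ref{corollarypositive} verbatim. So assume $M^m$, $m\ge 3$, has universal cover not homeomorphic to a CROSS, let $L=\ell_1\sqcup\ell_2\subset ST^*M$ admit an allowable virtual Legendrian isotopy to a pair of fibers, and suppose for contradiction that there is a non-negative strong allowable virtual Legendrian isotopy of $\ell_1$ to $\ell_2$. Granting, as in the proof of Theorem~\ref{theorempositiveisotopy}, that an allowable virtual Legendrian isotopy of a link which in every intermediate $ST^*M_k$ is Legendrian isotopic to a union of fibers can be replaced by a genuine Legendrian isotopy inside $ST^*M$, one argues as follows: the virtual isotopy of $L$ to a pair of fibers produces a contactomorphism $c_1$ of $ST^*M$ with $c_1(L)=F\sqcup F'$ a pair of sphere fibers; the non-negative virtual isotopy of $\ell_1$ to $\ell_2$ produces a genuine non-negative Legendrian isotopy $\varphi_t$ taking $F$ to $F'$; choosing a contactomorphism $c_2$ of $ST^*M$, induced by a diffeomorphism of $M$ interchanging the two base points, the concatenation of $c_1\circ\varphi_t$ with $c_2\circ c_1\circ\varphi_t$ is a non-constant non-negative Legendrian isotopy of a sphere fiber of $ST^*M$ to itself. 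Since the universal cover of $M$ is not homeomorphic to a CROSS this isotopy is strong, contradicting Conjecture~\ref{c:2.1}. Hence Conjecture~\ref{nononnegativeisotopy} follows at once from Conjecture~\ref{c:2.1} for $M^m$ together with the higher-dimensional analogue of the covering construction used in Theorem~\ref{theorempositiveisotopy}.

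The crux is that covering construction: given a single surgery transforming $M_{n-1}$ into $M_n$, performed away from the front projection of the link, and given that the surgered link $K_n\subset ST^*M_n$ is Legendrian isotopic to a union of fibers and admits a non-negative Legendrian isotopy to such a union, one wants to descend this isotopy to $ST^*M_{n-1}$ and thereby lower the number of surgeries. For a surface and a surgery of index one the belt sphere $C$ has codimension one in $M_n$, the inclusion $M_n\setminus C\hookrightarrow M_n$ is $\pi_1$-injective (here $\pi_1(M_n)$ is an HNN extension of $\pi_1(M_n\setminus C)$ along the subgroup carried by $C$), so there is an infinite-sheeted covering $\pi\co M_n\setminus C\to M_n$; a link isotopic to a union of fibers lifts to the pullback bundle, and the lift together with its non-negative isotopy is carried by the contactomorphism $\pi^*(ST^*M_n)\cong ST^*(M_n\setminus C)\cong ST^*(M_{n-1}\setminus 2D)\subset ST^*M_{n-1}$ back into $ST^*M_{n-1}$, with the sign of the isotopy preserved. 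The same argument should work for a surgery of index one in any dimension: one removes two $m$-discs and glues in $D^1\times S^{m-1}$, the belt sphere $C\cong S^{m-1}$ still has codimension one, the inclusion $M_n\setminus C\hookrightarrow M_n$ is still $\pi_1$-injective (the fundamental group only gains a free $\Z$ factor), and the rest is unchanged. In particular, for $m=3$, where every allowable surgery has index one, Conjecture~\ref{nononnegativeisotopy} should already follow from Conjecture~\ref{c:2.1} for $m=3$.

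The obstacle is the surgeries of index $\lambda$ with $2\le\lambda\le m/2$ that ``allowable'' permits once $m\ge 4$: there the belt sphere $C\cong S^{m-\lambda}$ has codimension $\lambda\ge 2$ in $M_n$, the complement $M_n\setminus C$ is then no longer the total space of a covering of $M_n$, and in general no covering of $M_n$ restores the topology of $M_{n-1}$, so the surface mechanism has no analogue. A substitute would have to exploit instead that a Legendrian knot isotopic to a fiber, together with the non-negative isotopy carrying it to a fiber, can be confined to an arbitrarily small ball in $M$ disjoint from the surgery region, and then show that pushing the front across the trace of the surgery does not obstruct the descent. I expect the subcritical case $\lambda<m/2$ to be the more tractable one, since there the belt sphere sits in the surgery trace with codimension strictly larger than $\lambda$ and there is room to push fronts off it; the critical case $\lambda=m/2$, which occurs only in even dimensions, is likely to be the genuinely hard step, and it may well require the same analytic input that underlies the known cases of Conjecture~\ref{c:2.1}. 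Throughout, one must also check that no intermediate manifold acquires a universal cover homeomorphic to a CROSS, so that the final contradiction is genuinely with the strong form of Conjecture~\ref{c:2.1}.
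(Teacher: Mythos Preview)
The paper does \emph{not} prove this statement: Conjecture~\ref{nononnegativeisotopy} is stated as an open conjecture, with no proof offered. It is invoked only hypothetically in Remark~\ref{howtoprovecausality}, where the authors note that Conjecture~\ref{conjecturecausality} in dimensions $m\ge 3$ would follow if Conjecture~\ref{nononnegativeisotopy} were true. So there is nothing to compare your attempt against.

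Your proposal is honest about being a strategy rather than a proof, and the overall shape---reduce to Conjecture~\ref{c:2.1} by rerunning the argument of Corollary~\ref{corollarypositive}, and reduce the virtual isotopy to a genuine one by an inductive covering/lifting argument in the style of Theorem~\ref{theorempositiveisotopy}---is exactly the template the paper uses in dimension two. Two points are worth flagging. First, even if your covering construction extended to all allowable surgeries, you would only have reduced one open conjecture to another: Conjecture~\ref{c:2.1} is itself unproved for $m\ge 3$ (Theorem~\ref{theorempositiveisotopy} handles only surfaces), so the final ``contradicting Conjecture~\ref{c:2.1}'' is not a contradiction with a theorem. Second, you have correctly located the genuine obstruction: for surgeries of index $\lambda\ge 2$ the belt sphere has codimension $\ge 2$, $M_n\setminus C\hookrightarrow M_n$ is a $\pi_1$-isomorphism rather than an injection with infinite-index image, and no covering of $M_n$ recovers $M_{n-1}$. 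Your suggested workaround (confine the front to a ball disjoint from the surgery region) is not obviously compatible with a prescribed non-negative isotopy whose trace you do not control, so this step remains a real gap, as you acknowledge.
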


\section{Generalization of Kuperberg result to virtual Legendrian knots and links in higher dimensions}

 By Kuperberg theorem~\cite{Kuperberg}, if two ordinary virtual knots in $M^2\times \R$ are virtually isotopic then they both can be simplified by a sequence of isotopies and modifications induced by surgeries on $M^2$ that decrease  to the situation where two knots are related by an automorphism of $M^2\times \R$ induced by an automorphism of $M^2.$ The similar result for virtual Legendrian knots { \bf in $ST^*M^2$, where $M^2$ is closed} was proved by us in~\cite{ChernovSadykovVirtual}. 

In particular we got the following corollary: if two Legendrian knots in $ST^*S^2$ are virtual  Legendrian isotopic then they are Legendrian isotopic. This follows immediately since the genus of $S^2$ can not be further decreased. 

For higher dimensional virtual Legendrian knots it is not clear what surgeries decrease the complexity of the underlying manifold or even what this complexity is. However below we formulate the conjecture generalizing the above corollary in our work.

\begin{conjecture} Two Legendrian links in $ST^*S^m$ or $ST^*\C P^n$ are Legendrian isotopic if and only if they are virtually Legendrian isotopic. (This conjecture of the first author in the case of spheres was first formulated in~\cite{CahnLevi}.) The same conjecture is formulated for Legendrian links in $J^1S^m$ and $J^1\C P^n.$ It is also interesting to know if a similar statement holds for the fibers of $ST^*M$ where $M$ is a CROSS.
\end{conjecture}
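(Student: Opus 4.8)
The ``only if'' direction is trivial, since an honest Legendrian isotopy is a special virtual Legendrian isotopy, so the whole content is the converse. The plan is to run the Kuperberg-type reduction~\cite{Kuperberg} that we carried out for closed surfaces in~\cite{ChernovSadykovVirtual}, and of which \theoref{theorempositiveisotopy} is the non-negative refinement. Present a virtual Legendrian isotopy between links $L_0,L_1\subset ST^*S^m$ as a finite sequence of honest Legendrian isotopies, allowable surgeries on the base together with the allowable deletions of handles, and contactomorphisms of $ST^*S^m$ induced by diffeomorphisms of $S^m$. Absorb the diffeomorphisms isotopic to the identity into the Legendrian isotopies; the remaining ones contribute only through the mapping class group of $S^m$ (which is nontrivial once $m\ge 5$) and of $\C P^n$, and that such a ``spurious'' contactomorphism does not alter the Legendrian isotopy class of a link in $ST^*S^m$ or $ST^*\C P^n$ is a separate point to be settled (it is automatic for $S^2=\C P^1$ and for small $m$).

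After this normalization the content becomes a statement about the sequence of $m$-manifolds $S^m=M_0,M_1,\dots,M_N=S^m$ occurring in the isotopy (respectively with $\C P^n$ at the two ends), consecutive ones related by an allowable surgery or an allowable handle deletion. What one really needs is a higher-dimensional substitute for Kuperberg's theorem: a notion of \emph{complexity} of $m$-manifolds under allowable moves, together with the statement that any such sequence can be ``pushed down'', i.e.\ that after performing further allowable simplifications at the two ends one may assume every $M_i$ has complexity no larger than that of $S^m$ (respectively $\C P^n$). Granting this, one argues by minimality exactly as in \theoref{theorempositiveisotopy}: among all virtual Legendrian isotopies between $L_0$ and $L_1$ take one using the fewest surgeries, and if it uses any, cancel a ``last'' handle. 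When that handle is a stabilization, i.e.\ a surgery on an embedded $S^0$, the covering-space/lifting argument of \theoref{theorempositiveisotopy} applies essentially verbatim and replaces the relevant portion of the isotopy in $ST^*M_j$ by one in $ST^*M_{j-1}$, contradicting minimality. Since $S^m$ and $\C P^n$ are to admit no allowable simplification at all, virtual Legendrian isotopy of links in $ST^*S^m$, respectively $ST^*\C P^n$, would then coincide with honest Legendrian isotopy.

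The two specific targets are what give the argument a chance: $S^m$ ($m\ge 2$) and $\C P^n$ are simply connected, so the covering and lifting constructions above yield honest lifts, and their isometry groups act transitively on $ST^*S^m$ and $ST^*\C P^n$, which one uses to put the link into a normal position before and after each round trip. The same scheme is intended to apply to $J^1S^m$ and $J^1\C P^n$ --- where, as noted in Section~1, the projection of the knot to the base is of codimension $0$, so the surgeries take place in the complement of that region and the covering arguments must be adjusted accordingly --- and, more speculatively, to $ST^*M$ for a general CROSS $M$.

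The main obstacle is precisely the missing ``higher-dimensional Kuperberg theorem''. In dimension $2$ the complexity is the genus and Kuperberg's theorem supplies the pushdown for free; in dimension $\ge 3$ it is not even clear what the right complexity is, and the naive candidates need not be well founded, since ``simply connected with the rational cohomology of a CROSS'' fails to pin down the diffeomorphism type because of exotic spheres~\cite{Besse}. Moreover, for a surgery on a higher-dimensional sphere the surgered manifold $M_j$ can itself be simply connected, so the cyclic-cover trick that handles stabilizations has no analogue and one is left with no evident way to cancel such a handle. Making that case work --- and hence the whole conjecture --- will presumably require either a genuine flexibility (h-principle type) theorem for Legendrian submanifolds of $ST^*S^m$ and $ST^*\C P^n$, or a Floer-theoretic invariant of virtual Legendrian links that is a complete invariant on these contact manifolds, which is why the statement is recorded here only as a conjecture.
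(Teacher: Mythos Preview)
The paper does not prove this statement: it is recorded as an open conjecture, and no proof or proof sketch follows it. The only surrounding discussion is the paragraph immediately before the conjecture, which recalls the surface case proved in~\cite{ChernovSadykovVirtual} and explicitly notes that ``for higher dimensional virtual Legendrian knots it is not clear what surgeries decrease the complexity of the underlying manifold or even what this complexity is.'' There is therefore nothing in the paper to compare your proposal against.

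Your write-up is likewise not a proof, and you say so yourself. What you give is an outline of a Kuperberg-type reduction modeled on~\cite{Kuperberg} and~\cite{ChernovSadykovVirtual}, together with a correct identification of the two principal gaps: the absence of any higher-dimensional analogue of Kuperberg's theorem (no obvious complexity function on $m$-manifolds that allowable surgeries would decrease), and the failure of the cyclic-cover cancellation trick from \theoref{theorempositiveisotopy} once surgeries of index $\ge 1$ can yield simply connected manifolds. This diagnosis is exactly in line with the paper's one-sentence commentary. So your ``possible approach'' is a reasonable elaboration of \emph{why} the statement is only a conjecture, but it should not be presented as a proof, and the paper offers no argument for you to be compared with.
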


\section{Virtual Legendrian isotopies and causality}

Low~\cite{Low0, Low1, Low2, Low3} formulated the conjecture relating topological linking of the skies to causality. Recall that a \emph{sky} is the sphere of light rays through a point, and it is located in the contact manifold of all future directed unparameterized  light rays (null geodesics). 
Recall that two points in a spacetime are causally related if  information traveling with the light or less can reach from one event point to the other, i.e. if there is a causal (non-spacelike) curve between these points. 

Low's conjecture is for globally hyperbolic spacetimes which are defined as spacetimes without closed causal trajectories (time travel machines) and with the assumption that the intersection of causal future and causal past of any two points is compact (absence of naked singularities when a traveling particle disappears  from the observable universe of the observer without crossing their line of sight), see~\cite{BS3}, or \cite{HE} for a more traditional and equivalent definition. Globally hyperbolic spacetimes form the most important and studied class of spacetimes and one of the versions of Penrose's Strong Cosmic Censorship conjecture~\cite{Penrose} says that all physically relevant spacetimes are globally hyperbolic. (The part of a spacetime located inside of black holes is not relevant to us since the particles can not escape the black hole horizons.) 

Geroch~\cite{Geroch} proved that globally hyperbolic spacetimes are homeomorphic to $\Sigma \times \R$ and Bernal-Sanchez~\cite{BS1, BS2} proved that it is in fact diffeomorphic to $\Sigma \times \R$ with each $\Sigma \times t$ being a {\em spacelike Cauchy surface\/} i.e. a codimension one submanifold intersected at exactly one point by every unextendible causal curve and such that the restriction of the Lorentz metric to it is Riemannian. 

As it was observed by Low for such spacetimes the contact manifold of all future directed unparameterized light rays (light geodesics) is contactomorphic to $ST^*\Sigma$ for every spacelike Cauchy surface $\Sigma.$

Low conjecture is for topological linking 
in $(2+1)$-dimensional spacetimes and $\Sigma$ being an open $2$-dimensional surface. This conjecture was reformulated for Legendrian linking for $(3+1)$-dimensional spacetimes with $\Sigma$ homeomorphic to $\R^3$ by Natario and Tod~\cite{NatarioTod}. All these conjectures were proved in the works of first author and Nemirovski~\cite{CN1, CN2, Chernov} and the current state of knowledge is that Legendrian linking is equivalent to causality for globally hyperbolic spacetimes of all dimensions provided that the universal cover of the Cauchy surface is not compact {\bf or} that it is compact but the integer cohomology of the universal covering is not the one of a CROSS.

Antoniou, Kauffman and Lambropolou~\cite{AKL} suggested that singular points of a Morse function on a spacetime can be considered as singularities corresponding to formation of black holes or something similar. The classical spacetimes are time oriented Lorentz manifolds. We generalize this notion as follows, a {\it generalized spacetime} is a manifold equipped with a bilinear symmetric form.   Furthermore we assume that it can be equipped with a Morse function such that away from the critical points the form restricts to a Lorentz metric, we also assume that the Morse function is a timelike function away from the critical points, i.e. the velocity vector of the flow lines of the gradient of the Morse function dot product with itself is negative. We also assume that the non-critical level sets of the Morse function are spacelike Cauchy surfaces. This definition is in the spirit of Borde et al~\cite{Borde} and Sorkin~\cite{Sorkin} Morse spacetimes. (See also Garcia-Haveling~\cite{GH} for some recent developments on the subject of Morse spacetimes.)

We do not quite agree with~\cite{AKL} on the interpretation of surgeries on a spacelike manifold as black hole formations, (because the topology of the level set of a Morse function does not have to change under passage through a black hole formation). However we observe that if the chunks of a generalized spacetime between critical points of a Morse function $f$ are globally hyperbolic and the indexes of the critical points of the Morse function  $-f$ are such that they correspond to allowable surgeries on level sets; then a sphere of light rays through a point considered as a Legendrian knot in $ST^*M$ (for $M$ that is the spacelike level set of a noncritical level of the Morse timelike function) experiences an allowable virtual Legendrian isotopy when passing through the critical level in the direction from higher levels to lower levels. We call such generalized spacetimes {\em allowable}.

Clearly the definition of causality still makes sense for such generalized spacetimes and we formulate

\begin{conjecture}\label{conjecturecausality}
Two points in a generalized allowable spacetime are causally related if and only if the Legendrian link of spheres of light rays through these two points  is not strongly allowable virtually Legendrian isotopic to the link of two fibers of $ST^*M_t$ of the level set manifold $M_t$. These Legendrian spheres can be considered in $ST^*M_t$ for any $M_t$ that is a non-critical level of the Morse function. We call such links non-trivial. 

The last links correspond to causally unrelated events. (The reason why we have to consider strong allowable virtual Legendrian isotopy rather than usual virtual Legendrian isotopy is that  Legendrian linking is not equivalent to causality when a Cauchy surface admits a $Y^x_{\ell}$ Riemann metric~\cite{CN1}.)

Note also that if the two points are on the common light ray then the corresponding link of spheres of light rays has a double point and hence is singular. We call such singular links nontrivial as well the links described above.
\end{conjecture}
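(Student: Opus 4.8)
The plan is to prove the conjecture in the range where current technology reaches, namely for $(2+1)$-dimensional allowable generalized spacetimes with at least one noncritical level surface not diffeomorphic to $S^2$; the strategy is to adapt the first author and Nemirovski's proof of the Legendrian version of Low's conjecture for globally hyperbolic $(2+1)$-spacetimes~\cite{CN1, CN2, Chernov}, using \theoref{theorempositiveisotopy} and \corref{corollarypositive} in the role played there by the non-existence of positive Legendrian isotopies of a fiber to itself.

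First I would record the Morse picture. Let $X$ carry a timelike Morse function $f$ with critical values $c_1<\dots<c_r$; on each chunk $X_i=f^{-1}([c_{i-1}+\eps,c_i-\eps])$ the form is an honest Lorentz metric, $X_i$ is globally hyperbolic, and its noncritical levels are spacelike Cauchy surfaces. Crossing a critical level in the direction of decreasing $f$ changes the level surface by a handle of index $\le1$; index $0$ handles only adjoin disjoint $2$-spheres, which do not affect Legendrian links, so the only essential modification is a stabilization performed away from the front, exactly as in \theoref{theorempositiveisotopy}. Thus the genus of the level surfaces is non-decreasing as $f$ decreases, and one may fix a noncritical level $M_\star$ in the causal past of both $p$ and $q$ with $\Sigma_\star$ of genus $\ge1$; then the skies $S_p,S_q$ are honest Legendrian knots in $ST^*\Sigma_\star$, and since every level below $M_\star$ has genus $\ge1$, any allowable virtual Legendrian isotopy of $S_p\sqcup S_q$ starting at this level is automatically strong. (If every noncritical level of $X$ is an $S^2$ there is no strong allowable virtual Legendrian isotopy and the statement is vacuous; I would dispatch this case first, and likewise the degenerate case where $p,q$ lie on a common light ray, in which the sky link is singular and hence nontrivial by definition.)

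For the implication ``$p,q$ causally unrelated $\Rightarrow$ $S_p\sqcup S_q$ virtually trivial'', I would push both skies down into $X_\star$ by an allowable virtual Legendrian isotopy, pass to a causal extension $\widehat X_\star\supset X_\star$ in which $S_p$ and $S_q$ are realized as the skies of genuine points $\widehat p$ and $\widehat q$, and apply the classical theorem~\cite{CN1, CN2} that causally unrelated skies in a globally hyperbolic $(2+1)$-spacetime are Legendrian isotopic to a pair of fibers; the only thing to verify is that causal unrelatedness of $p,q$ in $X$ implies that of $\widehat p,\widehat q$ in $\widehat X_\star$, which follows from the light-cone propagation lemma discussed below, and the resulting Legendrian isotopy is a strong allowable virtual Legendrian isotopy. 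For the converse, assume $p\le q$ and, for contradiction, that $S_p\sqcup S_q$ is strongly allowably virtually Legendrian isotopic to a pair of fibers. A future-directed causal curve from $p$ to $q$, traversed backward, moves $q$ to $p$ along a non-spacelike path, so the induced family of skies is a non-negative Legendrian isotopy of $S_q$ to $S_p$; moreover $f$ decreases along this backward path, so the only critical levels crossed are crossed in the allowable direction, and the isotopy is an allowable non-negative virtual Legendrian isotopy. Now, exactly as in the proof of \corref{corollarypositive}, I would use the covering-space argument of \theoref{theorempositiveisotopy} to replace the virtual Legendrian isotopies at hand by honest Legendrian isotopies on a fixed surface of non-compact universal cover, and then compose them with a contactomorphism interchanging the two fibers to obtain a non-constant non-negative Legendrian isotopy of a fiber to itself, contradicting~\cite{CN1} (equivalently, \theoref{theorempositiveisotopy}). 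Since moving a noncritical level downward is an allowable virtual Legendrian isotopy and being causally (un)related is a property of the pair $p,q$ alone, the dichotomy obtained is independent of the noncritical level $M_t$, as claimed.

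The step I expect to be the principal obstacle is the reduction of causality in $X$ to causality in a single honest globally hyperbolic chunk, since a causal curve witnessing $p\le q$ may pass through chunks containing neither point and through critical points of $f$, where the metric degenerates. I would isolate this as a ``propagation of light cones across critical levels'' lemma: because the index $\le1$ surgeries take place away from the fronts, and $f$ is timelike with Cauchy noncritical levels, the wave front of $p$ on each Cauchy surface is $\partial J^+(p)$ intersected with that surface, and it evolves by the generalized geodesic flow unaffected by the handle attachments; hence $q\in J^+(p)$ in $X$ is detected by whether $q$ lies on or inside the light cone of $p$ at $q$'s level, equivalently by whether $\widehat q$ lies on or inside the light cone of $\widehat p$ in the causally extended chunk $\widehat X_\star$ used above. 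Granting this lemma, the classical results~\cite{CN1, CN2, Chernov} apply inside $\widehat X_\star$, and what remains is the routine bookkeeping combining that lemma, the covering trick of \theoref{theorempositiveisotopy}, and \corref{corollarypositive}.
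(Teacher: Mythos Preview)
Your overall strategy for the $(2+1)$-dimensional case is sound, and the ``causally related $\Rightarrow$ link nontrivial'' direction matches the paper's argument almost exactly: a past-directed causal curve from $q$ to $p$ yields a non-negative allowable virtual Legendrian isotopy between the two skies, and one then invokes \corref{corollarypositive} (together with the covering lift of \theoref{theorempositiveisotopy}) to derive a contradiction from the assumed triviality of the link. There is no meaningful difference here.

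The ``causally unrelated $\Rightarrow$ link trivial'' direction, however, is organized differently. You push both skies down to a level $\Sigma_\star$ in the common past, then try to realize them as honest skies of points $\widehat p,\widehat q$ in a globally hyperbolic \emph{causal extension} $\widehat X_\star\supset X_\star$, and finally apply the classical Low--Chernov--Nemirovski theorem inside $\widehat X_\star$. The paper avoids this extension entirely. Instead it fixes the level $a_0=f(p)$, picks any future-directed timelike path $q^t$ with $f(q^t)=t$ ending at $q$, and observes that at $t=a_0$ both $S_0(p)$ and $S_0(q^{a_0})$ are literally fibers of $ST^*M_{a_0}$. Tracking the pair $(S_i(p),S_i(q^{a_i}))$ level by level and checking that each passage through a critical value is an elementary virtual Legendrian modification (corrected by a contactomorphism induced by a self-diffeomorphism of the level) produces directly a strong allowable virtual Legendrian isotopy of $(S_n(p),S_n(q))$ to a pair of fibers. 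No auxiliary spacetime, no realization lemma, and no appeal to the classical theorem are needed for this direction.

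Your route is not wrong, but the causal-extension step you flag as the ``principal obstacle'' is a genuine extra burden: after propagation through several critical levels the knots $S_p,S_q\subset ST^*\Sigma_\star$ are a priori only \emph{virtually} Legendrian isotopic to fibers, and you must first promote this to honest Legendrian isotopy (via the covering trick), then argue that the pair can be simultaneously realized as the skies of two causally unrelated points in a single globally hyperbolic spacetime with Cauchy surface $\Sigma_\star$, and finally check that causal unrelatedness in $X$ transfers to $\widehat X_\star$. Each of these is plausible but none is free. The paper's direct level-by-level construction sidesteps all of them by never leaving the category of virtual Legendrian isotopies; what you package as a ``propagation of light cones'' lemma is in the paper simply the statement that crossing a critical level is an elementary virtual Legendrian modification, which requires no causal interpretation at all.
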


\begin{remark}[How can one try to prove the above conjecture]\label{howtoprovecausality}
This conjecture is closely related to the one about nonexistence of allowable strong non-negative non-constant virtual Legendrian isotopy of a fiber to itself. The relation is similar to the ones of the paper of Nemirovski and the first author~\cite{CN1}. Namely the transport of a point in a spacetime to the past with the speed less or equal than light (causal trajectory) induces a non-negative Legendrian isotopy of the sphere of all light rays through the point in the contact manifold of all light rays, see~\cite{CN1} for timelike trajectories and~\cite{CN4} for causal trajectories. 

Given a Morse function $f$ on a manifold $M$ and any subset $I\subset \R$, let $M_{I}$ denote the subset $f^{-1}(I)$ of $M$. To simplify notation, we will write $M_{t}$ instead of $M_{\{t\}}$ in the case where $I=\{t\}$.

Let $p$ and $q$ be two events in a generalized spacetime $M$ equipped with a Morse function $f$ that is timelike in the complement to critical points. Suppose that $f(p)<f(q)$. We may slightly perturb the function $f$ so that both events $p$ and $q$ belong to regular levels of the function $f$. 
Let $c_1<\cdots <c_{n}$ be all critical values in the interval $[f(p), f(q)]$. Choose regular values $a_i$ so that 
\[
   f(p)=a_0<c_1<a_1<c_2<\cdots a_{n-1}< c_{n}<a_n=f(q).
\]
We observe that the manifold  $M_{(c_i, c_{i+1})}$ is globally hyperbolic, while the manifold of all light rays $\mathbf{L}_i$ on $M_{(c_i, c_{i+1})}$ can be canonically parametrized by the spherical cotangent bundle $ST^*M_{a}$ of the level $M_{a}$ for any $a\in (c_i, c_{i+1})$. These parametrizations are quite different. For example, a fiber of the cotangent bundle with respect to one canonical parametrization is not a fiber of the cotangent bundle with respect to another canonical parametrization.
For a point $x$ in $M_{[a_0, a_n]}$ we will denote the sphere of light rays in $\mathbf{L}_i$ emanated from $x$ by $S_i(x)$. 

Suppose that the events $p$ and $q$ are causally unrelated. Choose a future directed timelike path $q^t$, where $t\in [a_0, a_n]$, such that $f(q^t)=t$, $q^{a_n}=q$, and the path $q^t$ does not pass through critical points of the function $f$. As $t$ increases in the interval $[a_0, c_{1})$, the sphere $S_0(q^t)$ is modified by a Legendrian isotopy in $\mathbf{L}_0$. Similarly, as $t$ increases in $(c_i, c_{i+1})$ for $i=1,..., n-1$, the path $q^t$ defines a Legendrian isotopy of $S_i(q^t)$ in $\mathbf{L}_i$, and as $t$ increases in $(c_n, a_n]$ the path $q^t$ defines a Legendrian isotopy of $S_n(q^t)$ in $\mathbf{L}_n$. Also as $t$ varies in the considered intervals the sphere of light rays $S_i(p)$ in $\mathbf{L}_i$ for $i=0,..., n$ remains constant. 

Recall that the manifold of light rays $\mathbf{L}_i$ is canonically identified with the manifold $ST^*M_{a_i}$. In particular, every self-diffeomorphism of $M_{a_i}$ induces a contactomorphism of $ST^*M_{a_i}$.  Recall also that an elementary virtual Legendrian modification of a knot $K$ in $ST^*N$ consists of a modification of $ST^*N$ induced by a surgery on $N$ away from the front projection of $K$ resulting in a manifold $N'$, together with a contactomorphism of $ST^*N'$ induced by a self-diffeomorphism of $N'$. 

It follows that up to a contactomorphism of $\mathbf{L}_{i+1}$ induced by a diffeomorphism of the level $M_{a_{i+1}}$, the pair $(\mathbf{L}_{i+1}, S_{i+1}(q^{a_{i+1}}))$ is obtained from  $(\mathbf{L}_i, S(q^{a_{i}}))$ by elementary virtual Legendrian modifications. We emphasize that at this step we do not identify light rays in $ST^*M_{a_{i+1}}$ with light rays in 
$ST^*M_{a_{i}}$. Instead, we make a discrete transformation using a handle decomposition of $M_{[a_i, a_{i+1}]}$ associated with the restricted Morse function $f|M_{[a_i, a_{i+1}]}$. Each handle corresponds to one elementary virtual Legendrian modification. 
Under such a transformation, the complement to a neighborhood of the attaching sphere in $M_{a_{i}}$ is identified with the complement to a neighborhood of the belt sphere in $M_{a_{i+1}}$. This identification extends to an identification  of spherical cotangent bundles. Under this identification the fiber $S_{i+1}(q^{a_{i+1}})$ of the cotangent bundle $\mathbf{L}_{i+1}=ST^*M_{a_{i+1}}$ is identified with the fiber $S(q^{a_i})$ of the cotangent bundle $\mathbf{L}_{i}=ST^*M_{a_{i}}$. 

Similarly, we claim that as $t$ increases from $c_{i+1}-\varepsilon$ to $c_{i+1}+\varepsilon$ crossing a critical level $c_i$ for $i=1,..., n$, the sphere of light rays $S_i(p)$ changes to a sphere of light rays $S_{i+1}(p)$ by finitely many elementary virtual Legendrian modifications. Indeed, we may assume that the critical level $c_i$ contains only one critical point. Then the complement to a neighborhood of the attaching sphere in $M_{c_{i+1}-\varepsilon}$ can be identified with the complement to a neighborhood of the belt sphere in $M_{c_{i+1}+\varepsilon}$ by means of the gradient flow of $f$.  The above identification extends to an identification $\psi$ of spherical cotangent bundles. Under this identification the sphere of light rays $S_{i+1}(p)$ in $\mathbf{L}_{i+1}=ST^*M_{c_{i+1}+\varepsilon}$ may not coincide with the image $\psi(S_i(p))$ of the sphere $S_i(p)$ in  $\mathbf{L}_{i}=ST^*M_{c_{i+1}-\varepsilon}$ since the trajectories of the gradient flow of $f$ in the definition of $\psi$ are fairly arbitrary timelike trajectories that do not have much relation to the cones of light rays.
 However, this can be rectified by a contactomorphism of $\mathbf{L}_{i+1}$ induced by a self-diffeomorphism of $M_{c_{i+1}+\varepsilon}$.

Thus, we have constructed a virtual Legendrian isotopy of the sphere $S_0(p)$ of light rays to the sphere $S_n(p)$ and a virtual Legendrian isotopy of the sphere $S_0(q^0)$ to $S_n(q)$. 
Furthermore, for each $i$, the sphere of light rays of $p$ in $\mathbf{L}_i$ is disjoint from the sphere of light rays of $q$ in $\mathbf{L}_i$ since otherwise there would exist a causal trajectory from $p$ to $q$ for causally unrelated events $p$ and $q$. Thus, we have constructed a virtual Legendrian isotopy of the link of two fibers $(S_0(p), S_0(q^0))$ to the link $(S_n(p), S_n(q))$.   
The reverse isotopy is a strong allowable virtual Legendrian isotopy to a link of two fibers. 

To prove the converse statement of the conjecture, suppose that $p$ and $q$ are causally related. Then there is a causal path $q^t$ with $q^0=p$ and $q^n=q$. The argument similar to the one above shows now that there is a non-trivial non-negative Legendrian isotopy of the fiber $S_n(q^n)$ to the fiber $S_0(q^0)$. Indeed, as $t$ varies in the past direction in one of the intervals $[a_0, c_1), (c_i, c_{i+1})$ or $(c_n, a_n]$, the sphere $S_{i}(q^t)$ is modified by a non-trivial non-negative virtual Legendrian isotopy. As $t$ passes a critical point $c_{i+1}$ in the negative direction, the sphere $S_{i+1}(q^t)$ is modified by finitely many elementary virtual Legendrian modifications to a sphere $S_{i}(q^t)$. We emphasize that we never consider the sphere $S_n(q^t)$ for small values of $t$ since the light rays of $q^t$ may not reach the level $a_n$ because of the critical points. As $t$ becomes smaller than $c_1$, the sphere $S_0(q^t)$ turns into the fiber $S_0(q^0)$ in $\mathbf{L}_0=ST^*M_{a_0}$. 

Next we postcompose the resulting non-trivial non-negative virtual Legendrian isotopy from $S_n(q^n)$ to $S_0(q^0)$ with the virtual Legendrian isotopy of $S_0(p)=S_0(q^0)$ to $S_n(p)$ that consists of elementary virtual Legendrian modifications and contactomorphisms of cotangent bundles induced by diffeomorphisms of underlying manifolds. The resulting isotopy $\varphi$ is a non-trivial non-negative virtual Legendrian isotopy of one component of the link $(S_n(p), S_n(q))$ in $\mathbf{L}_n$  to the other. 

Assume now contrary to the statement of the conjecture that despite the existence of a causal path from $p$ to $q$ the link $(S_n(p), S_n(q))$ is 
an unlink, i.e., there is a contactomorphism $\psi$ of $\mathbf{L}_n$ that takes the link $(S_n(p), S_n(q))$ to the link that consists of two fibers. Then $\psi$ takes $\varphi$ to a non-negative non-trivial Legendrian isotopy that takes one fiber to another. 

By Corollary~\ref{corollarypositive}, this implies the conjecture for $2+1$-dimensional allowable generalized spacetimes.

For higher dimensional allowable generalized spacetimes the conjecture follows in a similar way provided that Conjecture~\ref{nononnegativeisotopy} is true.

\end{remark}

In particular we proved the following Theorem
\begin{theorem}\label{causalitytheorem}
The statement of conjecture~\ref{conjecturecausality} is true for $(2+1)$-dimensional generalized allowable spacetimes.
\end{theorem}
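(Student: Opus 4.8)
The plan is to carry out in full the scheme described in Remark~\ref{howtoprovecausality}, and to note that in the $(2+1)$-dimensional case its only non-formal input --- the absence of a non-constant non-negative strong allowable virtual Legendrian self-isotopy of a fiber --- is supplied by Corollary~\ref{corollarypositive} (which in turn relies on Theorem~\ref{theorempositiveisotopy}) because the Cauchy surfaces $M_t$ are $2$-dimensional. First I would fix the combinatorial set-up of Remark~\ref{howtoprovecausality}: given events $p,q$ in the generalized allowable spacetime $M$ with $f(p)<f(q)$, perturb $f$ so that $p$ and $q$ lie on regular levels, list the critical values $c_1<\cdots<c_n$ of $f$ in $[f(p),f(q)]$, interleave regular values $f(p)=a_0<c_1<a_1<\cdots<c_n<a_n=f(q)$, and recall that each slab $M_{(c_i,c_{i+1})}$ is globally hyperbolic with space of future directed light rays $\mathbf L_i$ canonically contactomorphic to the spherical cotangent bundle $ST^*M_{a_i}$ of the surface $M_{a_i}$; write $S_i(x)$ for the sky of a point $x$ in $\mathbf L_i$. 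The allowability hypothesis enters here as the statement that every critical point of $-f$ induces a surgery of index at most $1$ on the surface level sets, so that each passage through a critical level will produce \emph{allowable} elementary virtual Legendrian modifications in the sense recalled in Remark~\ref{howtoprovecausality}.

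For the implication ``$p,q$ causally unrelated $\Rightarrow$ the link of skies is trivial'', I would pick a future directed timelike path $q^{t}$, $t\in[a_0,a_n]$, with $f(q^{t})=t$, $q^{a_n}=q$, missing the critical points. Between consecutive critical levels $q^{t}$ deforms $S_i(q^{t})$ by a Legendrian isotopy in $\mathbf L_i$ while $S_i(p)$ stays constant, and crossing a critical level $c_i$ is realized by finitely many elementary virtual Legendrian modifications: identify the complement of a neighborhood of the attaching sphere in $M_{c_i-\varepsilon}$ with the complement of a neighborhood of the belt sphere in $M_{c_i+\varepsilon}$ via the gradient flow of $f$, extend this identification to the spherical cotangent bundles, and absorb the discrepancy between the resulting curve and the genuine sky above the critical level by a self-diffeomorphism of the upper surface. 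Composing over all slabs and critical levels yields a virtual Legendrian isotopy from the link of two fibers $(S_0(p),S_0(q^{a_0}))$ in $ST^*M_{a_0}$ to $(S_n(p),S_n(q))$ in $ST^*M_{a_n}$; since $p$ and $q$ are causally unrelated, $S_i(p)$ and $S_i(q^{t})$ are disjoint for all $i$ and all $t$ (otherwise a common light ray would be a causal curve from $p$ to $q$), so this is an isotopy of two-component links, and its reverse is the required strong allowable virtual Legendrian isotopy to a link of two fibers.

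For the converse, ``$p,q$ causally related $\Rightarrow$ the link of skies is non-trivial'', I would choose a causal path $q^{t}$ from $p=q^{a_0}$ to $q=q^{a_n}$ and use the theorem of Chernov and Nemirovski (\cite{CN1} for timelike and \cite{CN4} for causal trajectories) that transporting a point into the past along a causal curve induces a non-negative Legendrian isotopy of its sky. Then on each slab $q^{t}$ deforms $S_i(q^{t})$ by a non-constant non-negative virtual Legendrian isotopy, while crossing critical levels in the past direction again gives finitely many elementary virtual Legendrian modifications --- being careful, as the remark notes, never to form $S_n(q^{t})$ for small $t$, since the light rays of $q^{t}$ need not reach the top level. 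Composing produces a non-trivial non-negative virtual Legendrian isotopy from $S_n(q)$ to $S_0(q^{a_0})=S_0(p)$; postcomposing it with the handle-decomposition virtual Legendrian isotopy of $S_0(p)$ to $S_n(p)$ gives a non-trivial non-negative virtual Legendrian isotopy $\varphi$ carrying one component of $(S_n(p),S_n(q))$ to the other. If this link were strong allowable virtually Legendrian isotopic to a link of two fibers via a contactomorphism $\psi$ of $ST^*M_{a_n}$, then $\psi\circ\varphi$ would be a non-constant non-negative strong allowable virtual Legendrian self-isotopy of a fiber of $ST^*M_{a_n}$; since $M_{a_n}$ is a surface, this contradicts Corollary~\ref{corollarypositive}, so the link of skies must be non-trivial.

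The step I expect to be the main obstacle is the rigorous verification that crossing a single critical level is exactly a finite composition of elementary virtual Legendrian modifications in the required sense --- a surgery on the surface performed away from the front projection of the sky, followed by a contactomorphism induced by a surface self-diffeomorphism. This entails checking that the gradient-flow identification of the attaching-sphere complement with the belt-sphere complement extends to the spherical cotangent bundles; that the mismatch between the genuine sky above the critical level and the pushed-forward sky is absorbed by a self-diffeomorphism of the level surface; that for the $q$-component these choices can be made compatibly with non-negativity of the isotopy and so that the two skies remain disjoint throughout; and that the surfaces occurring along the construction never have universal cover a CROSS, so that the whole isotopy is \emph{strong}. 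Apart from this bookkeeping, the only ingredient special to $2+1$ dimensions is the final appeal to Corollary~\ref{corollarypositive}; in higher dimensions the same argument would deduce Conjecture~\ref{conjecturecausality} from Conjecture~\ref{nononnegativeisotopy}, exactly as recorded at the end of Remark~\ref{howtoprovecausality}.
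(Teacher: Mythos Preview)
Your proposal is correct and follows essentially the same route as the paper: the paper's proof of Theorem~\ref{causalitytheorem} \emph{is} the argument of Remark~\ref{howtoprovecausality}, concluding with the appeal to Corollary~\ref{corollarypositive} in the $(2+1)$-dimensional case, and you have reproduced that argument faithfully (including the handling of the two directions and the passage through critical levels). One small wording slip: after conjugating by $\psi$ you obtain a non-negative isotopy from one fiber to \emph{another}, not a self-isotopy of a single fiber---but this is exactly what Corollary~\ref{corollarypositive} forbids, so your citation is right even if the description is slightly off.
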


\begin{remark}
The proposed approach to the proof of Conjecture~\ref{conjecturecausality} is primarily based on the conjecture that there is no non-negative allowable virtual Legendrian isotopy of a sphere fiber of $ST^*M$ to itself. Allowable virtual isotopies prohibit surgeries on $M$ that involve gluing handles of index more than half of the dimension of $M.$ However the only example we know where in the usual non-negative non-constant virtual Legendrian of a fiber to itself exists involves handles of index equal to the dimension of $M.$ So it could be that these conjectures are true if one relaxes the definition of the allowable virtual Legendrian isotopy and prohibits only some of the surgeries involving handles of index more than half of the dimension of $M^m$ (for example index $m$ handles) rather than all of them.
\end{remark}

\section{A few words about non-existence of the generalization of Arnold's $4$-cusp conjecture}

Arnold~\cite{Arnold} formulated the following conjecture. Consider two Legendrian knots in $ST^*\R^2$ that are the slightly perturbed circle-fibers that project to the inward and the outward cooriented oval fronts. Then every generic Legendrian isotopy between them involves a moment where the front projection has four cusps.

Chekanov and Pushkar~\cite{ChekanovPushkar} proved this conjecture.

Using standard covering techniques $\R^2\to F^2$ it is clear that Arnold's conjecture is true for all surfaces other than a sphere. For the sphere the cogeodesic flow takes the expanding circle around the North pole to the contracting circle around the South pole. This propagation does not involve any cusps and the statement of the conjecture would be false.

The conjecture also does not hold for virtual Legendrian knots.

Indeed, consider an outward growing small circular planar wave front and perform an index $1$-surgery on the plane by gluing a handle one of whose feet is a disk inside of our front. Propagate the front a bit along the handle and do the reverse surgery via cutting along a circle located above the front. Now our front appears to be a contracting circle and no cusps appeared in this process at all. 


However the following Theorem is true.

\begin{theorem} Let $K_1$ and $K_2$ be two Legendrian knots in $ST^*M^2_1$ and $ST^*M_2$ that correspond to the small circular  inside and outside  cooriented fronts respectively. Assume moreover that $M_i\neq S^2$. Then every generic strong allowable virtual Legendrian isotopy from $K_1$ to $K_2$ involves a moment when the front projection of the knot has at least four cusps.
\end{theorem}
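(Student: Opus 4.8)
The plan is to assume a counterexample, reorganize it into an honest Legendrian isotopy on a single surface, and lift that isotopy to the universal cover $\R^2$, where the Chekanov--Pushkar theorem applies. So suppose, for contradiction, that there is a generic strong allowable virtual Legendrian isotopy $\Phi$ from $K_1$ to $K_2$ whose front projection has at most three cusps at every moment. On a surface an allowable surgery has index at most one; an index $0$ surgery disconnects a connected surface and an index $2$ surgery is not allowable, so $\Phi$ is a composition of Legendrian isotopies, contactomorphisms induced by self-diffeomorphisms of the underlying surface, and attachments and deletions of $1$-handles performed away from the front. Because $\Phi$ is strong, every surface occurring in it is connected, oriented, and not $S^2$, hence has universal cover diffeomorphic to $\R^2$; and every knot occurring in it is null-homotopic in its surface, since $K_1$, $K_2$, and therefore every intermediate knot, are Legendrian isotopic to a fibre, whose front is a point.

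First I would reorganize $\Phi$ --- by the standard maneuver for virtual objects: slide disjoint surgeries past one another and past the intervening isotopies, and cancel each $1$-handle attachment against the later deletion of the same handle, as in Kuperberg~\cite{Kuperberg} and the Kuperberg-type result of~\cite{ChernovSadykovVirtual} --- so that it becomes: first all the $1$-handle attachments, yielding a connected oriented surface $\bar M$ obtained from $M_1$ by attaching handles and hence still not $S^2$; then an ordinary Legendrian isotopy in $ST^*\bar M$; then all the $1$-handle deletions, bringing $\bar M$ down to $M_2$. Since surgeries are performed away from the front, this reorganization leaves every front unchanged, so the middle piece, after absorbing the diffeomorphism-induced contactomorphisms, is a generic Legendrian isotopy in $ST^*\bar M$ from $K_1$ to a Legendrian knot Legendrian isotopic to $K_2$, with at most three cusps at every moment. (Here one uses that a small cooriented circle and all of its images under self-diffeomorphisms of a surface are Legendrian isotopic to a small cooriented circle of the same inward/outward type, the diffeomorphism preserving the disk the circle bounds.) Lifting this isotopy to the cover $ST^*\widetilde{\bar M}=ST^*\R^2$ of $ST^*\bar M$ induced by the universal covering $\widetilde{\bar M}\cong\R^2$ of $\bar M$ --- which is possible because all the knots are null-homotopic in $\bar M$, and which changes no cusps because the covering projection is a local diffeomorphism --- produces a generic Legendrian isotopy in $ST^*\R^2$ between the perturbed inward and outward circle-fibres with at most three cusps at every moment. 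This contradicts the theorem of Chekanov and Pushkar~\cite{ChekanovPushkar}, so no such $\Phi$ exists and the theorem follows.

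The step I expect to be the main obstacle is the reorganization of $\Phi$: one must check that the attachments and deletions occurring in $\Phi$ can be slid past the intervening Legendrian isotopies and past one another, and that a $1$-handle attached and then later deleted forms a genuinely cancelling pair, all without disturbing the fronts. This is the mechanism underlying the identification of virtual knots with stable equivalence classes of knots in thickened surfaces, and in the Legendrian setting it is close in spirit to~\cite{ChernovSadykovVirtual}; but since the theorem asks for a sharp count of cusps, the bookkeeping that the reorganization, the self-diffeomorphisms, and the covering projection never create or destroy a cusp must be carried out with care. An alternative that stays closer to the proof of \theoref{theorempositiveisotopy} is to remove the handles one at a time, cutting along their co-cores and passing to the associated infinite cyclic covers, using the null-homotopy of each stage together with compactness of the trace to keep the isotopy in $ST^*$ of an oriented subsurface that is again not $S^2$, the handle-free base case being the lift to $ST^*\R^2$ above.
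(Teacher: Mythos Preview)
Your alternative at the end is exactly the paper's route: it removes the handles one at a time via the cover $M_i\setminus S_i\to M_i$ (so the isotopy on $M_i$ lifts to one on $M_{i-1}$), descends by induction to an honest Legendrian isotopy on the initial surface, and then invokes Arnold's conjecture for surfaces, which in turn is the lift to $\R^2$ and Chekanov--Pushkar. So you have the paper's argument in hand, just relegated to a closing remark.

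Your primary route---reorganize into a single Legendrian isotopy on one big surface $\bar M$ and lift once to $\R^2$---is genuinely different. Two comments on it. First, under the paper's reading of ``allowable'' the only surgeries occurring are $1$-handle \emph{attachments} (see the decomposition used in both proofs: ``$K^{(i+1)}$ is the stabilization of $\varphi^{(i)}(K^{(i)})$'' and ``$M_i$ is obtained from $M_{i-1}$ by attaching a handle''). In that case no Kuperberg-style reorganization is needed at all: the inclusions $M_0\subset M_1\subset\cdots\subset M_n$ already let you view every $\varphi^{(i)}$ as a Legendrian isotopy in $ST^*M_n$, so you may take $\bar M=M_n$ and pass directly to $\R^2$. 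This is arguably cleaner than the paper's backward descent, and the cusp bookkeeping is immediate since inclusions and the covering projection are local diffeomorphisms on fronts. Second, if you do want to allow deletions as well, then your flagged obstacle is real: sliding a later attachment past an earlier deletion can fail when the attaching disks meet the region where the deleted handle sat, and the cancellation of attachment/deletion pairs is exactly the nontrivial content of \cite{Kuperberg, ChernovSadykovVirtual}. The covering trick sidesteps this entirely, which is what the paper buys by going backward; your forward approach buys simplicity when only attachments are present.
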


\begin{proof}
Take a strong allowable virtual Legendrian isotopy of $K_1$ to $K_2$. It consists of 
\begin{itemize}
\item elementary Legendrian modifications resulting in surfaces $M_0, M_1, ..., M_n$, 
\item contactomorphisms, which we omit to simplify notations, and 
\item isotopies $\varphi_t^{(i)}$, $t\in [0,1]$ of Legendrian knots $K^{(i)}\subset ST^*M_i$ satisfying $K^{(0)}=K_1$,  $\varphi^{(n)}_1(K^{(n)})=K_2$, and the property that $K^{(i+1)}$ is the stabilization of $\varphi^{(i)}(K^{(i)})$.
\end{itemize}
 For $i=1,..., n$, the manifold $M_{i}$ is obtained from $M_{i-1}$ by attaching a handle.  Let $S_i\subset M_{i}$ denote the meridian of the handle attached to $M_{i-1}$.

As in the proof of Theorem~\ref{theorempositiveisotopy}, we observe that there is a covering $M_{i}\setminus S_i\to M_i$, and that $M_{i}\setminus S_i$ can be identified with a subset of $M_{i-1}$. Therefore an isotopy in $M_i$ of a knot in $M_i\setminus S_i$ can be lifted to an isotopy in $M_{i-1}$. Thus, starting with $i=n$, by induction with decreasing $i$, we may lift the family of isotopies $\varphi_t^{i}$ to a Legendrian isotopy from $K_1$ to a knot with an inside cooriented circular front. By the Arnold's conjecture for surfaces, we deduce that for the resulting Legendrian isotopy there is a moment at which the front projection of the knot has at least four cusps. Therefore, the same is true for the virtual allowable Legendrian isotopy.
\end{proof}

{\bf Acknowledgments.} The authors are grateful to Roman Golovko for the many motivating questions and they are very thankful to the anonymous referee for many corrections and suggestions. 

This work was partially supported by a grant from the Simons Foundation  (\#513272 to Vladimir Chernov).

\end{document}